\newcommand{\C}{\mathcal{C}}
\newcommand{\F}{\mathcal{F}}
\newcommand{\OF}{\mathcal{OF}}
\newcommand{\OI}{\mathcal{OI}}
\newcommand{\Ass}{\mathcal{A}ss}
\newcommand{\UA}{\mathcal{UA}}
\newcommand{\OO}{\mathcal{O}}
\newcommand{\D}{\mathcal{D}}
\newcommand{\B}{\mathcal{B}}
\newcommand{\G}{\mathcal{G}}
\newcommand{\E}{\mathcal{E}}
\newcommand{\ZZ}{\mathbb{Z}}
\newcommand{\A}{\mathcal{A}}
\newcommand{\RR}{\mathbb{R}}
\def\alp{{\alpha}}
\def\sig{{\sigma}}
\def\vphi{{\varphi}}
\def\Gam{{\Gamma}}
\def\Lam{{\Lambda}}
\def\vphi{{\varphi}}
\newtheorem{thm}{Theorem}[section]
\newtheorem{cor}[thm]{Corollary}
\newtheorem{lem}[thm]{Lemma}
\newtheorem{example}{Example}
\theoremstyle{definition}
\newtheorem{define}[thm]{Definition}
\theoremstyle{remark}
\newtheorem{rem}[thm]{Remark}
\DeclareMathOperator{\Fun}{Fun}
\DeclareMathOperator{\Cat}{Cat}
\DeclareMathOperator{\Bord}{Bord}
\DeclareMathOperator{\ev}{ev}
\DeclareMathOperator{\coev}{coev}
\DeclareMathOperator{\Map}{Map}
\DeclareMathOperator{\Grp}{Grp}
\DeclareMathOperator{\lax}{lax}
\DeclareMathOperator{\nd}{nd}
\DeclareMathOperator{\qu}{qu}
\DeclareMathOperator{\Alg}{Alg}
\DeclareMathOperator{\act}{act}
\DeclareMathOperator{\ori}{or}
\DeclareMathOperator{\un}{un}
\DeclareMathOperator{\sur}{sur}
\DeclareMathOperator{\Groth}{Groth}
\DeclareMathOperator{\bg}{big}
\DeclareMathOperator{\df}{def}
\DeclareMathOperator{\Op}{Op}
\DeclareMathOperator{\Env}{Env}
\DeclareMathOperator{\N}{N}
\DeclareMathOperator{\nun}{nu}
\def\lrar{\longrightarrow}
\def\hrar{\hookrightarrow}
\def\x{\stackrel}
\def \mcal{\mathcal}
\def \ovl{\overline}
\def \what{\widehat}
\def \wtl{\widetilde}
\DeclareTextFontCommand{\textcyr}{\fontencoding{OT2}\fontfamily{wncyr}\fontseries{m}\fontshape{n}\selectfont}
\title{ The Cobordism Hypothesis in Dimension $1$}
\author{Yonatan Harpaz}
\begin{document}
\maketitle

\begin{abstract}
In~\cite{lur1} Lurie published an expository article outlining a proof for a higher version of the cobordism hypothesis conjectured by Baez and Dolan in~\cite{bd}. In this note we give a proof for the 1-dimensional case of this conjecture. The proof follows most of the outline given in~\cite{lur1}, but differs in a few crucial details. In particular, the proof makes use of the theory of quasi-unital $\infty$-categories as developed by the author in~\cite{har}.
\end{abstract}

\tableofcontents

\section{ Introduction }

Let $\B^{\ori}_1$ denote the $1$-dimensional oriented cobordism $\infty$-category, i.e. the symmetric monoidal $\infty$-category whose objects are oriented $0$-dimensional closed manifolds and whose morphisms are oriented $1$-dimensional cobordisms between them.

Let $\D$ be a symmetric monoidal $\infty$-category with duals. The $1$-dimensional cobordism hypothesis concerns the $\infty$-category
$$ \Fun^{\otimes}(\B^{\ori}_1,\D) $$
of symmetric monoidal functors $\vphi: \B^{\ori}_1 \lrar \D$. If $X_+ \in \B^{\ori}_1$ is the object corresponding to a point with positive orientation then the evaluation map $Z \mapsto Z(X_+)$ induces a functor
$$ \Fun^{\otimes}(\B^{\ori}_1,\D) \lrar \D $$

It is not hard to show that since $\B^{\ori}_1$ has duals the $\infty$-category $\Fun^{\otimes}(\B^{\ori}_1,\D)$ is in fact an $\infty$-groupoid, i.e. every natural transformation between two functors $F,G: \B^{\ori}_1 \lrar \D$ is a natural equivalence. This means that the evaluation map $Z \mapsto Z(X_+)$ actually factors through a map
$$ \Fun^{\otimes}(\B^{\ori}_1,\D) \lrar \wtl{\D} $$
where $\wtl{\D}$ is the maximal $\infty$-groupoid of $\D$. The cobordism hypothesis then states

\begin{thm}\label{cobordism-hypothesis}
The evaluation map
$$ \Fun^{\otimes}(\B^{\ori}_1,\D) \lrar \wtl{\D} $$
is an equivalence of $\infty$-categories.
\end{thm}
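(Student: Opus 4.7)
My plan is to decompose the problem into two pieces: first a generators-and-relations analysis of the cobordism category $\B^{\ori}_1$, then a use of quasi-unital $\infty$-categories to handle the coherence data that is intrinsic to the $\infty$-categorical setting. I would begin by noting that every object of $\B^{\ori}_1$ is, up to symmetric monoidal equivalence, a disjoint union of copies of $X_+$ and $X_-$, and every morphism is built by composition and disjoint union from a short list of elementary oriented cobordisms: the evaluation $\ev: X_+ \otimes X_- \to \emptyset$, the coevaluation $\coev: \emptyset \to X_- \otimes X_+$, their opposite-oriented counterparts, and identity intervals. These satisfy the zigzag relations expressing that $X_-$ is a dual of $X_+$.

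Given a dualizable object $X \in \wtl{\D}$, the candidate inverse to the evaluation functor sends $X$ to the symmetric monoidal functor $Z$ with $Z(X_+) = X$, $Z(X_-) = X^\vee$, and with $Z(\ev)$ and $Z(\coev)$ the ambient duality data. The genuine difficulty is that in an $\infty$-category with duals the duality data forms a contractible but nontrivial space of choices, so defining $Z$ on arbitrary morphisms of $\B^{\ori}_1$ requires coherent higher homotopical choices for all iterated compositions. In a $(2,1)$-categorical setting this reduces to a finite list of diagrams (the snake identities and a small coherence), but in the fully $\infty$-categorical setting one must produce an infinite tower of coherent homotopies, and it is exactly this tower that obstructs a naive generators-and-relations argument.

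To handle the coherence, I would invoke the theory of quasi-unital $\infty$-categories developed in \cite{har}. The idea is to pass to a quasi-unital analogue $\B^{\ori,\qu}_1$ of the cobordism category in which identity morphisms are only specified up to contractible choice; in this setting one never has to name the identity $1$-cobordism explicitly, which is precisely what makes the $\infty$-coherence tower unwieldy. The quasi-unital cobordism category should admit a clean universal description as the free quasi-unital symmetric monoidal $\infty$-category with duals on one object, making both essential surjectivity and full faithfulness of the evaluation functor manifest at the quasi-unital level. The main theorem of \cite{har}, asserting an equivalence between quasi-unital and unital $\infty$-categories (with their symmetric monoidal refinements), then transports the quasi-unital statement to the desired conclusion about $\B^{\ori}_1$.

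The principal obstacle will be the middle step: establishing the universal property of $\B^{\ori,\qu}_1$. This amounts to a precise Morse-theoretic or cell-complex description of the moduli space of oriented $1$-manifolds with boundary, sufficient to identify the space of quasi-unital symmetric monoidal functors out of $\B^{\ori,\qu}_1$ into $\D$ with the space of dualizable objects of $\D$. Concretely, one must understand the homotopy type of the relevant diffeomorphism groups and moduli of decorated oriented graphs, and show that the homotopical relations they impose on morphism spaces coincide with those forced by quasi-unital duality alone. Once that identification is in place, Theorem \ref{cobordism-hypothesis} should follow by formal manipulation together with the transfer result from \cite{har}.
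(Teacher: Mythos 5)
Your high-level instinct --- that the infinite coherence tower for identity and duality data is the genuine obstruction, and that the theory of quasi-unital $\infty$-categories from~\cite{har} is the right tool for sidestepping the need to explicitly name identity $1$-cobordisms --- matches the paper's philosophy. But the central step of your proposal is, as you yourself flag, left open, and as stated it is essentially a restatement rather than a reduction: asserting that a quasi-unital cobordism category $\B^{\ori,\qu}_1$ is the free quasi-unital symmetric monoidal $\infty$-category with duals on one object is just the cobordism hypothesis moved across the equivalence $\Cat \simeq \Cat^{\qu}$ of~\cite{har}. You still have to prove freeness, and the proposal gives no concrete strategy for doing so beyond invoking ``a precise Morse-theoretic or cell-complex description.''

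The paper's actual route through that gap is structurally different. It never constructs a quasi-unital $\B^{\ori,\qu}_1$; instead it first interpolates $\B^{\un}_0 \hookrightarrow \B^{\ev}_1 \hookrightarrow \B^{\ori}_1$ (where $\B^{\ev}_1$ contains only identity and evaluation segments), disposes of the first inclusion by an elementary duality argument (Lemma~\ref{0-to-1-ev}), and reduces to showing that a given non-degenerate $\vphi : \B^{\ev}_1 \to \D$ extends essentially uniquely along the second inclusion (Theorem~\ref{cobordism-last-step-2}). It then linearizes that extension problem into a statement about lax symmetric monoidal functors $\B^{\ev}_1 \to \Grp_\infty$, the fiber functors $M_\iota$ and $M_\vphi$ (Theorem~\ref{qu-cobordism}), which is amenable to computation: via the Grothendieck construction $\F_\iota$ is identified with the monoidal envelope of the $\infty$-operad $\OF$ of connected oriented $1$-manifolds and orientation-preserving embeddings, the circle contribution is eliminated by a free-algebra adjunction whose input is a cofinality computation involving the cyclic category $\Lam$, and what remains is the contractibility of the space of non-unital algebra objects in an $\infty$-groupoid of self-equivalences. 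Quasi-unitality enters only at the very end, in comparing the fiber-functor picture to $\Cat^{\otimes}_{\B^{\un}_0/}$ via the quasi-unital category $\C_M$ reconstructed from a non-degenerate $M$. So while the ingredients you anticipate (the equivalence from~\cite{har} and a combinatorial/moduli argument) do appear, the reductions that make them usable --- the intermediate category $\B^{\ev}_1$, the passage to lax functors and left fibrations, the operadic envelope, and the reduction to the non-unital associative operad --- are entirely absent from your proposal, and they constitute nearly the whole of the proof.
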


\begin{rem}
From the consideration above we see that we could have written the cobordism hypothesis as an equivalence
$$ \wtl{\Fun}^{\otimes}(\B^{\ori}_1,\D) \x{\simeq}{\lrar} \wtl{\D} $$
where $\wtl{\Fun}^{\otimes}(\B^{\ori}_1,\D)$ is the maximal $\infty$-groupoid of $\Fun^{\otimes}(\B^{\ori}_1,\D)$ (which in this case happens to coincide with $\Fun^{\otimes}(\B^{\ori}_1,\D)$). This $\infty$-groupoid is the fundamental groupoid of the space of maps from $\B^{\ori}_1$ to $\D$ in the $\infty$-category $\Cat^{\otimes}$ of symmetric monoidal $\infty$-categories.
\end{rem}

In his paper~\cite{lur1} Lurie gives an elaborate sketch of proof for a higher dimensional generalization of the $1$-dimensional cobordism hypothesis. For this one needs to generalize the notion of $\infty$-categories to $(\infty,n)$-categories. The strategy of proof described in~\cite{lur1} is inductive in nature. In particular in order to understand the $n=1$ case, one should start by considering the $n=0$ case.

Let $\B^{\un}_0$ be the $0$-dimensional unoriented cobordism category, i.e. the objects of $\B^{\un}_0$ are $0$-dimensional closed manifolds (or equivalently, finite sets) and the morphisms are diffeomorphisms (or equivalently, isomorphisms of finite sets). Note that $\B^{\un}_0$ is a (discrete) $\infty$-groupoid.

Let $X \in \B^{\un}_0$ be the object corresponding to one point. Then the $0$-dimensional cobordism hypothesis states that $\B^{\un}_0$ is in fact the free $\infty$-groupoid (or $(\infty,0)$-category) on one object, i.e. if  $\G$ is any other $\infty$-groupoid then the evaluation map $Z \mapsto Z(X)$ induces an equivalence of $\infty$-groupoids

$$ \Fun^{\otimes}(\B^{\un}_0,\G) \x{\simeq}{\lrar} \G $$
\begin{rem}
At this point one can wonder what is the justification for considering non-oriented manifolds in the $n=0$ case oriented ones in the $n=1$ case. As is explained in~\cite{lur1} the desired notion when working in the $n$-dimensional cobordism $(\infty,n)$-category is that of \textbf{$n$-framed} manifolds. One then observes that $0$-framed $0$-manifolds are unoriented manifolds, while taking $1$-framed $1$-manifolds (and $1$-framed $0$-manifolds) is equivalent to taking the respective manifolds with orientation.
\end{rem}

Now the $0$-dimensional cobordism hypothesis is not hard to verify. In fact, it holds in a slightly more general context - we do not have to assume that $\G$ is an $\infty$-groupoid. In fact, if $\G$ is \textbf{any symmetric monoidal $\infty$-category} then the evaluation map induces an equivalence of $\infty$-categories
$$ \Fun^{\otimes}(\B^{\un}_0,\G) \x{\simeq}{\lrar} \G $$
and hence also an equivalence of $\infty$-groupoids
$$ \wtl{\Fun}^{\otimes}(\B^{\un}_0,\G) \x{\simeq}{\lrar} \wtl{\G} $$

Now consider the under-category $\Cat^{\otimes}_{\B^{\un}_0/}$ of symmetric monoidal $\infty$-categories $\D$ equipped with a functor $\B^{\un}_0 \lrar \D$. Since $\B^{\un}_0$ is free on one generator this category can be identified with the  $\infty$-category of \textbf{pointed} symmetric monoidal $\infty$-categories, i.e. symmetric monoidal $\infty$-categories with a chosen object. We will often not distinguish between these two notions.

Now the point of positive orientation $X_+ \in \B^{\ori}_1$ determines a functor $\B^{\un}_0 \lrar \B^{\ori}_1$, i.e. an object in $\Cat^{\otimes}_{\B^{\un}_0/}$, which we shall denote by $\B^+_1$. The $1$-dimensional coborodism hypothesis is then equivalent to the following statement:
\begin{thm}\label{0-to-1}[Cobordism Hypothesis $0$-to-$1$]
Let $\D \in \Cat^{\otimes}_{\B^{\un}_0 /}$ be a pointed symmetric monoidal $\infty$-category with duals. Then the $\infty$-groupoid
$$ \wtl{\Fun}^{\otimes}_{\B^{\un}_0 /}(\B^+_1,\D) $$
is \textbf{contractible}.
\end{thm}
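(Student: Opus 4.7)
The plan is to establish that $\B^+_1$ is the free pointed symmetric monoidal $\infty$-category with duals on its basepoint. Once this universal property is in place, contractibility of $\wtl{\Fun}^{\otimes}_{\B^{\un}_0 /}(\B^+_1,\D)$ for any pointed symmetric monoidal $\infty$-category with duals $(\D,d)$ is immediate: a symmetric monoidal functor $\B^{\ori}_1 \lrar \D$ sending $X_+$ to $d$ is then the same datum as a dualizable object in $\D$ pointed at $d$, and by hypothesis there is exactly one such (up to contractible choice).

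The first main step is to produce a presentation of $\B^{\ori}_1$. Using Morse theory for oriented $1$-manifolds with boundary, every cobordism can be cut along the level sets of a generic Morse function into elementary pieces: identity intervals, the evaluation cobordism $\ev: X_+ \sqcup X_- \lrar \emptyset$ (a cap), the coevaluation cobordism $\coev: \emptyset \lrar X_- \sqcup X_+$ (a cup), and closed components which themselves decompose via a chosen Morse function. The space of Morse functions on a fixed $1$-manifold is contractible in a controlled way, and this controls the higher coherences of the presentation. The output is an identification of $\B^{\ori}_1$ with the free symmetric monoidal $\infty$-category generated by $X_+$, a formal dual $X_-$, and maps $\ev,\coev$ subject only to the zig-zag equivalences exhibiting $(X_-,\ev,\coev)$ as a dual of $X_+$.

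The second step is the elementary $\infty$-categorical fact that in a symmetric monoidal $\infty$-category $\D$ with duals, the space of duality data $(d^{\vee}, \ev_d, \coev_d, \textrm{higher coherences})$ witnessing that a given object $d$ is dualizable is contractible. Combined with the presentation of $\B^{\ori}_1$ from the previous step, this immediately yields the desired contractibility: giving a symmetric monoidal functor $\vphi: \B^{\ori}_1 \lrar \D$ with $\vphi(X_+) = d$ is the same as giving a choice of duality data for $d$, and that choice is unique up to contractible ambiguity.

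The main obstacle, and the place where the argument departs from a purely formal outline, is the coherent treatment of units. The Morse-theoretic decomposition naturally produces cobordisms out of elementary pieces but does not come with strictly specified identity morphisms; recognizing identity cobordisms is a structure one must track separately, and doing so coherently up to all higher homotopies is subtle. Following the strategy indicated in the abstract, I would circumvent this difficulty by first working in the setting of \emph{quasi-unital} symmetric monoidal $\infty$-categories as developed in \cite{har}. Concretely, I would construct a quasi-unital model $\B^{\ori,\qu}_1$ of the cobordism category in which the above Morse-theoretic presentation is manifest, prove the cobordism hypothesis at the quasi-unital level (where the freeness statement is essentially built into the construction), and then transfer the result back to the unital setting via the equivalence between quasi-unital and unital symmetric monoidal $\infty$-categories proved in \cite{har}. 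This quasi-unital detour is the technical heart of the argument.
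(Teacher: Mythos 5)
Your proposal captures the correct high-level picture (decompose cobordisms into elementary pieces, observe duality data is unique, detour through quasi-unital categories to evade the unit-coherence problem), and you are right that the quasi-unital passage is where the real work lies. But as written the argument has a genuine gap at its core: your ``first main step'' --- asserting that Morse theory produces an identification of $\B^{\ori}_1$ with the free symmetric monoidal $\infty$-category on a dualizable object --- \emph{is} the theorem being proven. Saying that the space of Morse functions on a fixed $1$-manifold is contractible ``in a controlled way'' and that this ``controls the higher coherences of the presentation'' is not an argument; it is precisely the gap that the whole construction is supposed to fill. A Morse decomposition of each individual cobordism does not, by itself, yield a generators-and-relations presentation of the symmetric monoidal $\infty$-category: one needs to organize the decompositions coherently over all cobordisms, over all isotopies, over all gluings, and so on, and this is where all the difficulty lives.

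The paper does not attempt to produce the free presentation directly from Morse theory. Instead it first splits the problem in two: Lemma~\ref{0-to-1-ev} handles the passage from $\B^{\un}_0$ to the intermediate category $\B^{\ev}_1$ (identity and evaluation segments only), reducing to the ``last step'' Theorem~\ref{cobordism-last-step-2}, which asserts an essentially unique extension from $\B^{\ev}_1$ to $\B^{\ori}_1$. The last step is then attacked operadically, not Morse-theoretically: one passes to fiber functors $M_\iota, M_\vphi: \B^{\ev}_1 \lrar \Grp_\infty$, unstraightens to left fibrations $\F_\iota, \F_\vphi \lrar \B^{\ev}_1$, and identifies $\F_\iota$ with the monoidal envelope of the $\infty$-operad $\OF^\otimes$ of connected oriented $1$-manifolds and orientation-preserving embeddings (Lemma~\ref{left-envelope}). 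The genuinely nontrivial technical content is then twofold: (a) Theorem~\ref{removing-circles}, which reduces $\OF$ to the sub-operad $\OI$ of intervals via a cofinality argument using Connes' cyclic category $\Lam_\infty$ (this is how the circle, and hence closed components, get handled --- your sketch does not address them at all); and (b) the final reduction in Lemma~\ref{final-lemma} to the contractibility of the $\infty$-groupoid of non-unital algebra objects in the monoidal $\infty$-groupoid of self-equivalences of $\vphi(X_+)$, which ultimately boils down to $\B(\UA)$ being weakly contractible. None of this machinery appears in your outline, and the ``Morse-theoretic presentation'' you invoke is doing the work that this machinery is designed to do. You have correctly diagnosed the unit problem and its quasi-unital remedy, but the heart of the argument --- how one actually gets from a decomposition of cobordisms to a freeness theorem --- is left unproven.
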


Theorem~\ref{0-to-1} can be considered as the inductive step from the $0$-dimensional cobordism hypothesis to the $1$-dimensional one. Now the strategy outlines in~\cite{lur1} proceeds to bridge the gap between $\B^{\un}_0$ to $\B^{\ori}_1$ by considering an intermediate $\infty$-category
$$ \B^{\un}_0 \hrar \B^{\ev}_1 \hrar \B^{\ori}_1 $$
This intermediate $\infty$-category is defined in~\cite{lur1} in terms of framed functions and index restriction. However in the $1$-dimensional case one can describe it without going into the theory of framed functors. In particular we will use the following definition:
\begin{define}
Let $\iota: \B^{\ev}_1 \hrar \B^{\ori}_1$ be the subcategory containing all objects and only the cobordisms $M$ in which every connected component $M_0 \subseteq M$ is either an identity segment or an evaluation segment.
\end{define}

Let us now describe how to bridge the gap between $\B^{\un}_0$ and $\B^{\ev}_1$. Let $\D$ be an $\infty$-category with duals and let
$$ \vphi:\B^{\ev}_1 \lrar \D $$
be a symmetric monoidal functor. We will say that $\vphi$ is \textbf{non-degenerate} if for each $X \in \B^{\ev}_1$ the map
$$ \vphi(\ev_X): \vphi(X) \otimes \vphi(\check{X}) \simeq \vphi(X \otimes \check{X}) \lrar \vphi(1) \simeq 1 $$
is \textbf{non-degenerate}, i.e. identifies $\vphi(\check{X})$ with a dual of $\vphi(X)$. We will denote by
$$ \Cat^{\nd}_{\B^{\ev}_1 /} \subseteq \Cat^{\otimes}_{\B^{\ev}_1 /} $$
the full subcategory spanned by objects $\vphi: \B^{\ev}_1 \lrar \D$ such that $\D$ has duals and $\vphi$ is non-degenerate.

Let $X_+ \in \B^{\ev}_1$ be the point with positive orientation. Then $X_+$ determines a functor
$$ \B^{\un}_0 \lrar \B^{\ev}_1 $$
The restriction map $\vphi \mapsto \vphi|_{\B^{\un}_0}$ then induces a functor
$$ \Cat^{\nd}_{\B^{\ev}_1 /} \lrar \Cat^{\otimes}_{\B^{\un}_0 /} $$

Now the gap between $\B^{\ev}_1$ and $\B^{\un}_0$ can be climbed using the following lemma (see~\cite{lur1}):
\begin{lem}\label{0-to-1-ev}
The functor
$$ \Cat^{\nd}_{\B^{\ev}_1 /} \lrar \Cat^{\otimes}_{\B^{\un}_0 /} $$
is fully faithful.
\end{lem}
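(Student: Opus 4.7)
The plan is to reduce the full faithfulness statement to a uniqueness-of-duals assertion in $\D_2$.

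The key preliminary input is a universal property for $\B^{\ev}_1$. From the explicit geometric description of its morphisms as disjoint unions of identity and evaluation segments, freely composed via the symmetric monoidal structure, one should identify $\B^{\ev}_1$ with the free symmetric monoidal $\infty$-category generated by two objects $X_+, X_-$ together with a single morphism $\ev: X_+ \otimes X_- \to \mathbf{1}$. Equivalently, for any symmetric monoidal $\infty$-category $\D$, evaluation on this data provides an equivalence between $\Fun^{\otimes}(\B^{\ev}_1, \D)$ and the $\infty$-category of triples $(d_+, d_-, e: d_+ \otimes d_- \to \mathbf{1})$ in $\D$.

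Now fix $(\D_1, \vphi_1), (\D_2, \vphi_2) \in \Cat^{\nd}_{\B^{\ev}_1/}$. Using the standard pullback formula for mapping spaces in an undercategory, $\Map_{\Cat^{\otimes}_{\B^{\ev}_1/}}((\D_1,\vphi_1),(\D_2,\vphi_2))$ is the homotopy fiber of the precomposition map $\Map_{\Cat^{\otimes}}(\D_1,\D_2) \to \Map_{\Cat^{\otimes}}(\B^{\ev}_1, \D_2)$, $F \mapsto F\circ\vphi_1$, over the basepoint $\vphi_2$, and similarly for $\B^{\un}_0$. Comparing these homotopy pullbacks fiberwise over a fixed symmetric monoidal functor $F: \D_1 \to \D_2$ reduces the assertion to showing that, for every such $F$, the restriction map from the space of natural equivalences $F\circ\vphi_1 \simeq \vphi_2$ in $\Fun^{\otimes}(\B^{\ev}_1, \D_2)$ to the analogous space in $\Fun^{\otimes}(\B^{\un}_0, \D_2)$ is itself an equivalence.

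By the universal property above, a point in the left-hand space is a triple $(\alpha_+, \alpha_-, h)$ consisting of equivalences $\alpha_\pm: F\vphi_1(X_\pm) \simeq \vphi_2(X_\pm)$ together with a filling $h$ of the naturality square for the evaluation morphism, while a point in the right-hand space is just the equivalence $\alpha_+$. Hence the problem reduces to showing that for each fixed $\alpha_+$ the space of pairs $(\alpha_-, h)$ is contractible. Here non-degeneracy enters essentially: since $F$ is symmetric monoidal and both $\vphi_i$ are non-degenerate, each of $(F\vphi_1(X_-), F\vphi_1(\ev))$ and $(\vphi_2(X_-), \vphi_2(\ev))$ exhibits a dual of the object $\vphi_2(X_+)$ (after using $\alpha_+$ to identify $F\vphi_1(X_+)$ with it), and the space of such dual data on a fixed dualizable object of a symmetric monoidal $\infty$-category with duals is contractible.

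The main obstacle is the first step: carefully matching the explicit cobordism description of $\B^{\ev}_1$ with the formal free symmetric monoidal $\infty$-category on the stated generators, including the higher-coherence data hidden in the symmetric monoidal structure. The uniqueness-of-duals fact used at the end is classical in spirit but should also be given a careful $(\infty,1)$-categorical formulation; once both ingredients are in place, the argument follows by assembling the pullback comparison.
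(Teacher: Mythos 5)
Your argument follows essentially the same route as the paper's proof: both reduce full faithfulness (via the undercategory mapping-space formula) to showing that restriction along $\B^{\un}_0 \hookrightarrow \B^{\ev}_1$ identifies the $\infty$-groupoid of non-degenerate functors $\B^{\ev}_1 \to \D$ with the maximal groupoid $\widetilde{\D}$, and both establish this by combining the free generation of $\B^{\ev}_1$ by two objects and an evaluation morphism with the contractibility of the space of duality data on a fixed dualizable object. You correctly flag the universal property of $\B^{\ev}_1$ as the point needing care, which the paper also invokes without detailed justification.
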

\begin{proof}
First note that if $F:\D \lrar \D'$ is a symmetric monoidal functor where $\D,\D'$ have duals and $\vphi: \B^{\ev}_1 \lrar \D$ is non-degenerate then $f \circ \vphi$ will be non-degenerate as well. Hence it will be enough to show that if $\D$ has duals then the restriction map induces an equivalence between the $\infty$-groupoid of non-degenerate symmetric monoidal functors
$$ \B^{\ev}_1 \lrar \D $$
and the $\infty$-groupoid of symmetric monoidal functors
$$ \B^{\un}_0 \lrar \D $$

Now specifying a non-degenerate functor
$$ \B^{\ev}_1 \lrar \D $$
is equivalent to specifying a pair of objects $D_+,D_- \in \D$ (the images of $X_+,X_-$ respectively) and a non-degenerate morphism
$$ e: D_+ \otimes D_- \lrar 1 $$
which is the image of $\ev_{X_+}$. Since $\D$ has duals the $\infty$-groupoid of triples $(D_+,D_-,e)$ in which $e$ is non-degenerate is equivalent to the $\infty$-groupoid of triples $(D_+,\check{D}_-,f)$ where $f: D_+ \lrar \check{D}_-$ is an equivalence. Hence the forgetful map $(D_+,D_-,e) \mapsto D_+$ is an equivalence.
\end{proof}

Now consider the natural inclusion $\iota: \B^{\ev}_1 \lrar \B^{\ori}_1$ as an object in $\Cat^{\nd}_{\B^{\ev}_1 /}$. Then by Lemma~\ref{0-to-1-ev} we see that the $1$-dimensional cobordism hypothesis will be established once we make the following last step:

\begin{thm}[Cobordism Hypothesis - Last Step]\label{cobordism-last-step}
Let $\D$ be a symmetric monoidal $\infty$-category with duals and let $\vphi: \B^{\ev}_1 \lrar \D$ be a \textbf{non-degenerate} functor. Then the $\infty$-groupoid
$$ \wtl{\Fun}^{\otimes}_{\B^{\ev}_1 /}(\B^{\ori}_1,\D) $$
is contractible.
\end{thm}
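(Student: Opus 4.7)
The plan is to realize the oriented cobordism category $\B^{\ori}_1$ as an essentially unique ``unit completion'' of $\B^{\ev}_1$, and to deduce the contractibility statement from the uniqueness theorem for quasi-unital $\infty$-categories developed in~\cite{har}. The geometric starting point is that every connected component of an oriented $1$-manifold is either an identity segment, an evaluation segment, a coevaluation segment, or a circle. Consequently the generators present in $\B^{\ori}_1$ but missing from $\B^{\ev}_1$ are precisely the coevaluation morphisms $\coev_X: 1 \lrar X \otimes \check{X}$, together with the higher cells (zigzag diffeomorphisms, closed components, and their higher iterates) that couple them to the evaluation morphisms already available in $\B^{\ev}_1$.

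First I would analyze the extension problem pointwise. Non-degeneracy of $\vphi$ means that for each object $X$ the image $\vphi(\ev_X)$ exhibits $\vphi(\check{X})$ as a dual of $\vphi(X)$ in $\D$, and since $\D$ has duals, the space of coevaluations witnessing this duality is contractible at each fixed $X$. The real content of the theorem is that all of these pointwise contractible choices can be coherently assembled into a symmetric monoidal functor $\B^{\ori}_1 \lrar \D$, and that this coherent assembly is itself unique up to a contractible space of choices. In particular the higher zigzag identities, the values on circle components, and all higher cobordism diffeomorphisms must be simultaneously pinned down by this process.

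This is precisely the sort of assembly problem handled by the theory of quasi-unital $\infty$-categories. I would exhibit $\B^{\ev}_1$ as a symmetric monoidal quasi-unital $\infty$-category whose ``quasi-units'' are encoded by the duality data carried by the evaluation segments, and verify that a non-degenerate symmetric monoidal functor $\vphi: \B^{\ev}_1 \lrar \D$ is the same datum as a quasi-unital symmetric monoidal functor from $\B^{\ev}_1$ to $\D$ (the latter regarded as a unital category in the trivial way). The main theorem of~\cite{har} then asserts that the space of unital lifts of such a quasi-unital functor is contractible. Identifying $\B^{\ori}_1$ with the unital completion of $\B^{\ev}_1$ within this framework reduces the contractibility of $\wtl{\Fun}^{\otimes}_{\B^{\ev}_1/}(\B^{\ori}_1,\D)$ to a direct application of this uniqueness statement.

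The main obstacle I expect is exactly this identification of $\B^{\ori}_1$ with the unit completion of the quasi-unital $\infty$-category built from $\B^{\ev}_1$. One must give a sufficiently explicit presentation of $\B^{\ori}_1$ in which the role of coevaluations as ``units'' is transparent, and then match this presentation against the abstract universal construction of~\cite{har}. This involves checking at the level of $1$- and $2$-cells that the zigzag identities and circle components correspond exactly to the coherence data demanded by the quasi-unital formalism. Once this matching is established, the remainder of the argument is a formal invocation of the quasi-unital uniqueness theorem, completing the last step and thereby the proof of the $1$-dimensional cobordism hypothesis.
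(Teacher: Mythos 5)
Your proposal correctly identifies one of the two main ingredients of the paper's argument---the theory of quasi-unital $\infty$-categories from~\cite{har} and the idea that the missing coevaluations behave like a unitality datum---but it mistakes this ingredient for the whole argument, and it misidentifies where the quasi-unital formalism actually enters.

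First, the claim that one should ``exhibit $\B^{\ev}_1$ as a symmetric monoidal quasi-unital $\infty$-category'' with $\B^{\ori}_1$ as its unit completion is not correct as stated: $\B^{\ev}_1$ is already a perfectly ordinary unital $\infty$-category, with identity segments as identity morphisms, so its unital completion is itself. The quasi-unital object that actually appears is the auxiliary category $\C_{M_\vphi}$ built from the \emph{fiber functor} $M_\vphi(X) = \Map_{\D}(1,\vphi(X))$, whose mapping spaces $\Map_{\C_{M_\vphi}}(X,Y) = M_\vphi(\check{X}\otimes Y)$ are strictly larger than those of $\B^{\ev}_1$; it is this twisted construction, not $\B^{\ev}_1$ itself, whose quasi-units come from non-degeneracy, and for which $\C_{M_\iota}$ recovers $S(\B^{\ori}_1)$. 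Your proposal needs this fiber-functor intermediary, and it is not a cosmetic reformulation: the functor comparing the two worlds ($G\circ F$ in the paper) is only shown to be \emph{fully faithful}, not an equivalence, so the uniqueness theorem $\Cat\simeq\Cat^{\qu}$ alone cannot conclude the argument.

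Second, and more seriously, the proposal treats the contractibility as ``a direct application'' of the quasi-unital uniqueness theorem. It is not. The paper must first establish a separate and substantial result---the quasi-unital cobordism hypothesis (Theorem~\ref{qu-cobordism}), which asserts contractibility of $\Map_{\Fun^{\lax}_{\nd}}(M_\iota,M_\vphi)$---and that is where essentially all the geometric and combinatorial work lives: a symmetric monoidal Grothendieck construction turns the fiber functors into left fibrations; $\F_\iota$ is identified as the monoidal envelope of an $\infty$-operad $\OF^{\otimes}$ of connected $1$-manifolds and embeddings; a free-algebra argument (Lemma~\ref{free-algebra}) plus the weak contractibility of the cyclic-type category $\Lam^{\sur}_{\infty}$ reduce $\OF$ to the non-unital associative operad $\OI$ (Theorem~\ref{removing-circles}), thereby disposing of circle components; and a final argument shows that non-unital algebra objects in the monoidal groupoid of self-equivalences of $\vphi(X_+)$ form a contractible space, using that $\B(\UA)$ is weakly contractible. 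None of this is visible in your outline. The theorem from~\cite{har} is only used afterwards, to transport this contractibility across the quasi-unital/unital divide. Without proving something equivalent to Theorem~\ref{qu-cobordism}, your outline has a genuine gap: the ``coherent assembly'' you allude to is exactly the content that still needs a proof.
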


Note that since $\B^{\ev}_1 \lrar \B^{\ori}_1$ is essentially surjective all the functors in
$$ \wtl{\Fun}^{\otimes}_{\B^{\ev}_1 /}(\B^{\ori}_1,\D) $$
will have the same essential image of $\vphi$. Hence it will be enough to prove for the claim for the case where $\vphi: \B^{\ev}_1 \lrar \D$ is \textbf{essentially surjective}. We will denote by
$$ \Cat^{\sur}_{\B^{\ev}_1 /} \subseteq \Cat^{\nd}_{\B^{\ev}_1 /} $$
the full subcategory spanned by essentially surjective functors $\vphi: \B^{\ev}_1 \lrar \D$. Hence we can phrase Theorem~\ref{cobordism-last-step} as follows:

\begin{thm}[Cobordism Hypothesis - Last Step 2]\label{cobordism-last-step-2}
Let $\D$ be a symmetric monoidal $\infty$-category with duals and let $\vphi: \B^{\ev}_1 \lrar \D$ be an \textbf{essentially surjective non-degenerate} functor. Then the space of maps
$$ \Map_{\Cat^{\sur}_{\B^{\ev}_1 /}}(\iota,\vphi) $$
is contractible.
\end{thm}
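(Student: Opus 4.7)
The plan is to reduce the contractibility of $\Map_{\Cat^{\sur}_{\B^{\ev}_1 /}}(\iota,\vphi)$ to a pointwise contractibility statement for duality data, using the theory of quasi-unital $\infty$-categories from~\cite{har} to bridge the homotopy coherence gap.

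First I would analyse the generators-and-relations structure of $\B^{\ori}_1$ relative to $\B^{\ev}_1$. The only cobordisms in $\B^{\ori}_1$ that do not already lie in $\B^{\ev}_1$ are those containing connected components isomorphic to coevaluation segments, so $\B^{\ori}_1$ is generated under symmetric monoidal composition by $\B^{\ev}_1$ together with the coevaluations $\coev_X : 1 \to \check X \otimes X$, subject to the zigzag identities. Translated into our extension problem, a symmetric monoidal functor $\wtl\vphi : \B^{\ori}_1 \to \D$ extending $\vphi$ is determined at the level of generating $1$-morphisms by the images $c_X := \wtl\vphi(\coev_X)$, and the zigzag identities force each $c_X$ to exhibit $\vphi(\check X)$ as a dual of $\vphi(X)$ with evaluation counterpart $\vphi(\ev_X)$. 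Non-degeneracy of $\vphi$ supplies $\vphi(\check X)$ already as a dual; since duality completions in an $\infty$-category with duals form a contractible space, the pointwise space of choices for each $c_X$ is contractible.

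The substantive point is to promote these pointwise contractible choices into a full symmetric monoidal functor, i.e.\ to encode all higher compositions, braidings, and compatibilities coherently. Here I would invoke the main result of~\cite{har}, which identifies symmetric monoidal $\infty$-categories with their quasi-unital analogues via a unitalization equivalence. The strategy is to present $\B^{\ori}_1$ as the unitalization of an auxiliary quasi-unital symmetric monoidal $\infty$-category $\B^{\qu}_1$ built from $\B^{\ev}_1$ by formally adjoining coevaluation morphisms whose duality identities hold but whose unit coherence is provided by quasi-unitality rather than on the nose. Under this presentation, the mapping space $\Map_{\Cat^{\sur}_{\B^{\ev}_1/}}(\iota,\vphi)$ becomes equivalent to the space of quasi-unital symmetric monoidal extensions of $\vphi$ to $\B^{\qu}_1$, which in turn decomposes as a homotopy limit over objects of $\B^{\ev}_1$ of the (contractible) spaces of duality completions of $\vphi(\ev_X)$. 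Contractibility of the total space follows.

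The main obstacle I foresee is the verification that $\B^{\ori}_1$ really arises as the unitalization of such a $\B^{\qu}_1$, which requires matching the morphism spaces of $\B^{\ori}_1$ — geometrically, homotopy types of moduli of $1$-dimensional oriented bordisms — with those produced by the quasi-unital construction from formal coevaluations modulo zigzag. This is essentially a geometric/combinatorial identification of two presentations of $\B^{\ori}_1$, the cobordism one and the algebraic one, and is where the genuine $1$-dimensional content of the theorem sits. Once this identification is in place, the contractibility statement reduces mechanically to the pointwise duality-completion argument above together with the formal properties of quasi-unitalization.
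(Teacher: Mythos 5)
Your outline correctly identifies the informal shape of the problem (the new generators of $\B^{\ori}_1$ relative to $\B^{\ev}_1$ are coevaluations, and pointwise duality data is contractible) and correctly guesses that quasi-unital $\infty$-categories from~\cite{har} are the tool used to bridge the coherence gap. But the route you sketch diverges from the paper's, and the step you flag as ``the substantive point'' is precisely where a real gap opens up and is left unfilled.

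Concretely, two claims in your outline are not justified and, as stated, do not hold. First, the claim that once one passes to the quasi-unital setting the mapping space ``decomposes as a homotopy limit over objects of $\B^{\ev}_1$ of the (contractible) spaces of duality completions.'' Mapping spaces of symmetric monoidal functors do not decompose objectwise in this way; the higher coherences binding the coevaluations across different objects and tensor products do not come for free from quasi-unitality. The paper does not make any such limit decomposition: instead it isolates a separate statement, the \emph{quasi-unital cobordism hypothesis} (Theorem~\ref{qu-cobordism}), asserting that $\Map_{\Fun^{\lax}_{\nd}}(M_\iota,M_\vphi)$ is contractible, and proves it by a genuinely geometric/operadic argument --- passing to left fibrations over $\B^{\ev}_1$ via Grothendieck unstraightening, identifying $\F_\iota$ as the monoidal envelope of an $\infty$-operad $\OF^\otimes$ of connected $1$-manifolds and orientation-preserving embeddings, reducing from $\OF$ to the non-unital associative operad $\OI$ via a free-algebra adjunction (Theorem~\ref{removing-circles}, which uses the contractibility of a combinatorial category built from Connes' cyclic category $\Lam_\infty$), and finally reducing to showing that the $\infty$-groupoid of non-unital algebra objects in the monoidal $\infty$-groupoid of self-equivalences of $\vphi(X_+)$ is contractible. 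None of this is visible in your outline, and it is exactly the content that would substantiate your hoped-for ``mechanical reduction.''

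Second, the proposal to present $\B^{\ori}_1$ itself as the unitalization of an auxiliary quasi-unital $\B^{\qu}_1$ ``built from $\B^{\ev}_1$ by formally adjoining coevaluation morphisms'' is not what the paper does, and you yourself flag that verifying this presentation is the main obstacle --- i.e., you are deferring, not resolving, the $1$-dimensional geometric content. The paper instead applies the quasi-unital machinery on the \emph{target} side: from a non-degenerate lax symmetric functor $M\colon \B^{\ev}_1 \to \Grp_\infty$ it builds a quasi-unital pointed symmetric monoidal $\infty$-category $\C_M$ (with $\Map_{\C_M}(X,Y)=M(\check X\otimes Y)$), shows $G\circ F \simeq S\circ T$ naturally (where $T$ is restriction to $\B^{\un}_0$ and $S$ is the forgetful equivalence $\Cat^\otimes \to \Cat^{\qu,\otimes}$), deduces $G\circ F$ is fully faithful from Lemma~\ref{0-to-1-ev} and the equivalence $S$, and then sandwiches $\Map_{\Cat^{\sur}_{\B^{\ev}_1/}}(\iota,\vphi)$ between a contractible middle term (by Theorem~\ref{qu-cobordism}) and a composite which is an equivalence by full faithfulness. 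So the structure of the actual proof is: one hard new theorem (the quasi-unital cobordism hypothesis) plus a formal sandwich argument, and your outline is missing the hard theorem entirely while replacing the sandwich with an unproved homotopy-limit claim. To repair the proposal you would have to either supply the operadic analysis of $\OF$ and the cyclic-category contractibility input, or else actually construct $\B^{\qu}_1$ and prove the unitalization identification --- either way, the geometric work you defer is not a loose end but the main body of the argument.
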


The purpose of this paper is to provide a formal proof for this last step. This paper is constructed as follows. In \S~\ref{s-qu-cobordism} we prove a variant of Theorem~\ref{cobordism-last-step-2} which we call the quasi-unital cobordism hypothesis (Theorem~\ref{qu-cobordism}). Then in \S~\ref{s-from-qu-to-regular} we explain how to deduce Theorem~\ref{cobordism-last-step-2} from Theorem~\ref{qu-cobordism}. Section \S~\ref{s-from-qu-to-regular} relies on the notion of \textbf{quasi-unital $\infty$-categories} which is developed rigourously in~\cite{har} (however \S~\ref{s-qu-cobordism} is completely independent of~\cite{har}).

\section{ The Quasi-Unital Cobordism Hypothesis }\label{s-qu-cobordism}

Let $\vphi: \B^{\ev}_1 \lrar \D$ be a non-degenerate functor and let $\Grp_\infty$ denote the $\infty$-category of $\infty$-groupoids. We can define a lax symmetric functor $M_\vphi: \B^{\ev}_1 \lrar \Grp_{\infty}$ by setting
$$ M_\vphi(X) = \Map_{\D}(1,\vphi(X)) $$
We will refer to $M_\vphi$ as the \textbf{fiber functor} of $\vphi$. Now if $\D$ has duals and $\vphi$ is non-degenerate, then one can expect this to be reflected in $M_\vphi$ somehow. More precisely, we have the following notion:
\begin{define}
Let $M: \B^{\ev}_1 \lrar \Grp_{\infty}$ be a lax symmetric monoidal functor. An object $Z \in M(X \otimes \check{X})$ is called \textbf{non-degenerate} if for each object $Y \in \B^{\ev}_1$ the natural map
$$ M(Y \otimes \check{X}) \x{Id \times Z}{\lrar} M(Y \otimes \check{X}) \times M(X \otimes \check{X}) \lrar M(Y \otimes \check{X} \otimes X \otimes \check{X}) \x{M(Id \otimes \ev \otimes Id)}{\lrar} M(Y \otimes \check{X}) $$
is an equivalence of $\infty$-groupoids.
\end{define}

\begin{rem}\label{uniqueness}
If a non-degenerate element $Z \in M(X \otimes \check{X})$ exists then it is unique up to a (non-canonical) equivalence.
\end{rem}
\begin{example}\label{unit}
Let $M: \B^{\ev}_1 \lrar \Grp_{\infty}$ be a lax symmetric monoidal functor. The lax symmetric structure of $M$ includes a structure map $1_{\Grp_{\infty}} \lrar M(1)$ which can be described by choosing an object $Z_1 \in M(1)$. The axioms of lax monoidality then ensure that $Z_1$ is non-degenerate.
\end{example}

\begin{define}
A lax symmetric monoidal functor $M: \B^{\ev}_1 \lrar \Grp_{\infty}$ will be called \textbf{non-degenerate} if for each object $X \in \B^{\ev}_1$ there exists a non-degenerate object $Z \in M(X \otimes \check{X})$.
\end{define}

\begin{define}
Let $M_1,M_2: \B^{\ev}_1 \lrar \Grp_{\infty}$ be two non-degenerate lax symmetric monoidal functors. A lax symmetric natural transformation $T: M_1 \lrar M_2$ will be called \textbf{non-degenerate} if for each object $X \in \Bord^{\ev}$ and each non-degenerate object $Z \in M(X \otimes \check{X})$ the objects $T(Z) \in M_2(X \otimes \check{X})$ is non-degerate.
\end{define}

\begin{rem}
From remark~\ref{uniqueness} we see that if $T(Z) \in M_2(X \otimes \check{X})$ is non-degenerate for \textbf{at least one} non-degenerate $Z \in M_1(X \otimes \check{X})$ then it will be true for all non-degenerate $Z \in M_1(X \otimes \check{X})$.
\end{rem}

Now we claim that if $\D$ has duals and $\vphi: \B^{\ev}_1 \lrar \D$ is non-degenerate then the fiber functor $M_\vphi$ will be non-degenerate: for each object $X \in \B^{\ev}_1$ there exists a coevaluation morphism
$$ \coev_{\vphi(X)}: 1 \lrar \vphi(X) \otimes \vphi(\check{X}) \simeq \vphi(X \otimes \check{X}) $$
which determines an element in $Z_X \in M_\vphi(X \otimes \check{X})$. It is not hard to see that this element is non-degenerate.

Let $\Fun^{\lax}(\B^{\ev}_1,\Grp_{\infty})$  denote the $\infty$-category of lax symmetric monoidal functors $\B^{\ev}_1 \lrar \Grp_{\infty}$ and by
$$ \Fun_{\nd}^{\lax}(\B^{\ev}_1,\Grp_{\infty}) \subseteq \Fun^{\lax}(\B^{\ev}_1,\Grp_{\infty}) $$
the subcategory spanned by non-degenerate functors and non-degenerate natural transformations. Now the construction $\vphi \mapsto M_\vphi$ determines a functor

$$ \Cat^{\nd}_{\B^{\ev}_1 /} \lrar \Fun_{\nd}^{\lax}(\B^{\ev}_1,\Grp_{\infty}) $$
In particular if $\vphi: \B^{\ev}_1 \lrar \C$ and $\psi: \B^{\ev}_1 \lrar \D$ are non-degenerate then any functor $T:\C \lrar \D$ under $\B^{\ev}_1$ will induce a non-degenerate natural transformation
$$ T_*: M_{\vphi} \lrar M_{\psi} $$

The rest of this section is devoted to proving the following result, which we call the "quasi-unital cobordism hypothesis":

\begin{thm}[Cobordism Hypothesis - Quasi-Unital] \label{qu-cobordism}
Let $\D$ be a symmetric monoidal $\infty$-category with duals, let $\vphi: \B^{\ev}_1 \lrar \D$ be a non-degenerate functor and let $\iota: \B^{\ev}_1 \hrar \B^{\ori}_1$ be the natural inclusion. Let $M_\iota,M_\vphi \in \Fun^{\lax}_{\nd}$ be the corresponding fiber functors. Them the space of maps
$$ \Map_{\Fun^{\lax}_{\nd}}(M_\iota, M_\vphi) $$
is contractible.
\end{thm}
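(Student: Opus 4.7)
The plan is to construct and classify non-degenerate transformations $M_\iota \to M_\vphi$ by first giving an explicit model for $M_\iota$. Unpacking the definition, $M_\iota(X) = \Map_{\B^{\ori}_1}(1,X)$ is the classifying space of oriented $1$-cobordisms from $\emptyset$ to $X$. Every such cobordism decomposes uniquely up to isotopy as a disjoint union of arcs (each pairing a positively oriented point of $X$ with a negatively oriented one) together with a finite number of closed circles. Consequently $M_\iota(X)$ is a disjoint union, indexed by combinatorial type (matching plus number of circles), of classifying spaces of the associated finite symmetry groups, with an extra $\mathrm{SO}(2)$-factor per circle.

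The key structural observation is that $M_\iota$ is generated by the canonical single-arc elements $Z_X \in M_\iota(X \otimes \check{X})$ together with $\B^{\ev}_1$-morphisms: every element of $M_\iota(Y)$ arises, up to equivalence, as $M_\iota(\delta)(Z_{X_1} \otimes \cdots \otimes Z_{X_n})$ for some $\delta$ in $\B^{\ev}_1$, and in particular the circle in $M_\iota(\emptyset)$ is obtained as $M_\iota(\ev_{X_+})(Z_{X_+})$. Each $Z_X$ is visibly non-degenerate in the sense defined above. I would then construct a non-degenerate transformation $T: M_\iota \to M_\vphi$ by choosing non-degenerate $W_X \in M_\vphi(X \otimes \check{X})$ (which exist by non-degeneracy of $M_\vphi$), setting $T(Z_X) = W_X$, and extending by $\B^{\ev}_1$-naturality. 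Well-definedness reduces to checking that the relations $M_\iota$ imposes on the $Z_X$'s are automatically satisfied by non-degenerate elements in any non-degenerate functor, which is precisely what the target hypothesis captures.

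The main obstacle, and the real content of the theorem, is passing from essential uniqueness to contractibility of the mapping space. By Remark~\ref{uniqueness}, non-degenerate elements are unique only up to \emph{non-canonical} equivalence, so a priori the space of choices $\{W_X\}$ could assemble into a non-trivial groupoid rather than a contractible space. To rigidify, I would invoke the full coherence required by lax monoidal naturality over all of $\B^{\ev}_1$: in particular, compatibility with each evaluation morphism forces $W_X$ to exhibit $\vphi(\check{X})$ as a specific dual of $\vphi(X)$ in $\D$. Since $\D$ has duals, the space of such duality witnesses for a given object is contractible, and these contractible spaces should assemble coherently via the tensor product and naturality data into an iterated limit of contractible spaces, yielding contractibility of $\Map_{\Fun^{\lax}_{\nd}}(M_\iota, M_\vphi)$. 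The technical heart of the argument will be setting up this limit presentation precisely enough to leverage the contractibility of dualities.
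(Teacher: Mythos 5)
Your proposal captures the right high-level intuition---that $M_\iota$ is in some sense freely generated by the single-arc elements $Z_X$, and that contractibility of the mapping space should flow from contractibility of spaces of duality data---but it stops short of the two technical arguments on which the whole theorem actually rests, and it is not clear the final assembly step, as you sketch it, is even true.

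First, the passage "extending by $\B^{\ev}_1$-naturality" and the appeal to an "iterated limit of contractible spaces" is where the real content hides, and your proposal offers no mechanism for it. The paper makes this precise by passing through the Grothendieck construction: $\F_\iota := \Groth(\B^{\ev}_1, M_\iota)$ is identified as the enveloping left fibration of an explicit $\infty$-operad $\OF^\otimes$ of connected oriented $1$-manifolds and orientation-preserving embeddings, so that non-degenerate transformations $M_\iota \to M_\vphi$ become $\OF$-algebra objects in $\F_\vphi$. The "freeness" intuition you invoke is replaced by a genuine free-algebra adjunction. Without some such replacement, "extending by naturality" is not well-defined: different presentations of the same element of $M_\iota(Y)$ in terms of the $Z_X$ must be shown to agree, and you have given no argument for this.

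Second, and more seriously, the circle is not a formal consequence of the arcs in the naive sense your proposal suggests. The circle component of $M_\iota(\emptyset)$ carries an $S^1$ of automorphisms, and the coherence constraints tying the image of the circle to the images of the arcs involve the cyclic structure of the ways a circle can be cut into arcs. The paper handles this with a dedicated theorem (the restriction $\Alg_{\OF/\OO} \to \Alg_{\OI/\OO}$ is an equivalence, with the essential input being that $\Lam^{\sur}_\infty$, a surjective variant of the universal cover of Connes' cyclic category, is weakly contractible). Nothing in "relations are automatically satisfied by non-degenerate elements" addresses this; if one replaces $\Lam^{\sur}_\infty$ by a non-contractible category, the analogous statement would simply be false, so this cannot be a formal consequence. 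Finally, even after discarding circles, what remains is not "the space of duality witnesses is contractible" but rather the assertion that the $\infty$-groupoid of non-unital algebra objects in the monoidal $\infty$-groupoid of self-equivalences of $\vphi(X_+)$ is contractible. That requires its own argument (the paper realizes this groupoid as maps out of a weakly contractible classifying space $\B(\UA)$), and it is not the same thing as contractibility of the space of (co)evaluations exhibiting a dual. Your proposal identifies the obstacle correctly but does not supply either of these two key inputs, so as written it is a gap, not a proof.
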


\begin{proof}
We start by transforming the lax symmetric monoidal functors $M_\iota,M_\vphi$ to \textbf{left fibrations} over $\B^{\ev}_1$ using the symmetric monoidal analogue of Grothendieck's construction, as described in~\cite{lur1}, page $67-68$.

Let $M: \B \lrar \Grp_\infty$ be a lax symmetric monoidal functor. We can construct a symmetric monoidal $\infty$-category $\Groth(\B,M)$ as follows:
\begin{enumerate}
\item
The objects of $\Groth(\B,M)$ are pairs $(X, \eta)$ where $X \in \B$ is an object and $\eta$ is an object of $M(X)$.
\item
The space of maps from $(X,\eta)$ to $(X',\eta')$ in $\Groth(\B,M)$ is defined to be the classifying space of the $\infty$-groupoid of pairs $(f,\alp)$ where $f: X \lrar X'$ is a morphism in $B$ and $\alp: f_*\eta \lrar \eta$ is a morphism in $M(X')$. Composition is defined in a straightforward way.
\item
The symmetric monoidal structure on $\Groth(\B,M)$ is obtained by defining
$$ (X,\eta) \otimes (X',\eta') = (X \otimes X',\beta_{X,Y}(\eta \otimes \eta')) $$
where $\beta_{X,Y}: M(X) \times M(Y) \lrar M(X \otimes Y)$ is given by the lax symmetric structure of $M$.
\end{enumerate}

The forgetful functor $(X,\eta) \mapsto X$ induces a \textbf{left fibration}
$$ \Groth(\B,M) \lrar \B $$

\begin{thm}\label{unstraightening}
The association $M \mapsto \Groth(\B,M)$ induces an equivalence between the $\infty$-category of lax-symmetric monoidal functors $\B \lrar \Grp_\infty$ and the full subcategory of the over $\infty$-category
$ \Cat^{\otimes}_{/\B} $
spanned by left fibrations.
\end{thm}
\begin{proof}
This follows from the more general statement given in~\cite{lur1} Proposition $3.3.26$. Note that any map of left fibrations over $\B$ is in particular a map of coCartesian fibrations because if $p: \C \lrar \B$ is a left fibration then any edge in $\C$ is $p$-coCartesian.
\end{proof}

\begin{rem}
Note that if $\C \lrar \B$ is a left fibration of symmetric monoidal $\infty$-categories and $\A \lrar \B$ is a symmetric monoidal functor then the $\infty$-category
$$ \Fun^{\otimes}_{/ \B}(\A,\C) $$
is actually an \textbf{$\infty$-groupoid}, and by Theorem~\ref{unstraightening} is equivalent to the $\infty$-groupoid of lax-monoidal natural transformations between the corresponding lax monoidal functors from $\B$ to $\Grp_\infty$.
\end{rem}

Now set
$$ \F_\iota \x{\df}{=} \Groth(\B^{\ev}_1,M_{\iota}) $$
$$ \F_\vphi \x{\df}{=} \Groth(\B^{\ev}_1,M_{\vphi}) $$

Let
$$ \Fun^{\nd}_{/\B^{\ev}_1}(\F_{\iota},\F_{\vphi}) \subseteq \Fun^{\otimes}_{/\B^{\ev}_1}(\F_{\iota},\F_{\vphi}) $$
denote the full sub $\infty$-groupoid of functors which correspond to \textbf{non-degenerate} natural transformations
$$ M_\iota \lrar M_\vphi $$
under the Grothendieck construction. Note that $\Fun^{\nd}_{/\B^{\ev}_1}(\F_{\iota},\F_{\vphi})$ is a union of connected components of the $\infty$-groupoid $\Fun^{\otimes}_{/\B^{\ev}_1}(\F_{\iota},\F_{\vphi})$.

We now need to show that the $\infty$-groupoid
$$ \Fun^{\nd}_{/\B^{\ev}_1}(\F_{\iota},\F_{\vphi}) $$
is contractible.

Unwinding the definitions we see that the objects of $\F_{\iota}$ are pairs $(X,M)$ where $X \in \B^{\ev}_1$ is a $0$-manifold and $M \in \Map_{\B^{\ori}_1}(\emptyset,X)$ is a cobordism from $\emptyset$ to $X$. A morphism in $\vphi$ from $(X,M)$ to $(X',M')$ consists of a morphism in $\B^{\ev}_1$
$$ N:X \lrar X' $$
and a diffeomorphism
$$ T:M \coprod_{X} N \cong M' $$
respecting $X'$. Note that for each $(X,M) \in \F_{\iota}$ we have an identification $X \simeq \partial M$. Further more the space of morphisms from $(\partial M,M)$ to $(\partial M',M')$ is \textbf{homotopy equivalent to the space of orientation-preserving $\pi_0$-surjective embeddings of $M$ in $M'$} (which are not required to respect the boundaries in any way).

Now in order to analyze the symmetric monoidal $\infty$-category $\F_\iota$ we are going to use the theory of \textbf{$\infty$-operads}, as developed in~\cite{lur2}. Recall that the category $\Cat^{\otimes}$ of symmetric monoidal $\infty$-categories admits a forgetful functor
$$ \Cat^{\otimes} \lrar \Op^{\infty} $$
to the $\infty$-category of \textbf{$\infty$-operads}. This functor has a left adjoint
$$ \Env: \Op^{\infty} \lrar \Cat^{\otimes} $$
called the \textbf{monoidal envelope} functor (see~\cite{lur2} \S $2.2.4$). In particular, if $\mcal{C}^{\otimes}$ is an $\infty$-operad and $\D$ is a symmetric monoidal $\infty$-category with corresponding $\infty$-operad $\D^{\otimes} \lrar \N(\Gam_*)$ then there is an \textbf{equivalence of $\infty$-categories}
$$ \Fun^{\otimes}(\Env(\C^{\otimes}),\D) \simeq \Alg_{\mcal{C}}(\D^{\otimes}) $$
Where $\Alg_{\mcal{C}}\left(\D^{\otimes}\right) \subseteq \Fun_{/\N(\Gam_*)}(\mcal{C}^{\otimes},\D^{\otimes})$ denotes the full subcategory spanned by $\infty$-operad maps (see Proposition $2.2.4.9$ of~\cite{lur2}).

Now observing the definition of monoidal envelop (see Remark $2.2.4.3$ in~\cite{lur2}) we see that $\F_{\iota}$ is equivalent to the monoidal envelope of a certain simple $\infty$-operad
$$ F_\iota \simeq \Env\left(\OF^{\otimes}\right) $$
which can be described as follows: the underlying $\infty$-category $\OF$ of $\OF^{\otimes}$ is the $\infty$-category of \textbf{connected} $1$-manifolds (i.e. either the segment or the circle) and the morphisms are \textbf{orientation-preserving embeddings} between them. The (active) $n$-to-$1$ operations of $\OF$ (for $n\geq 1$) from $(M_1,...,M_n)$ to $M$ are the orientation-preserving embeddings
$$ M_1 \coprod ... \coprod M_n \lrar M $$
and there are no $0$-to-$1$ operations.

Now observe that the induced map $\OF^{\otimes} \lrar (\B^{\ev}_1)^{\infty}$ is a fibration of $\infty$-operads. We claim that $\F_{\iota}$ is not only the enveloping symmetric monoidal $\infty$-category of $\OF^{\otimes}$, but that $\F_{\iota} \lrar \B^{\ev}_1$ is the enveloping \textbf{left fibration} of $\OF \lrar \B^{\ev}_1$. More precisely we claim that for any left fibration $\D \lrar \B^{\ev}_1$ of symmetric monoidal $\infty$-categories the natural map
$$ \Fun^{\otimes}_{/\B^{\ev}_1}\left(F_{\iota},\D\right) \lrar \Alg_{\OF / \B^{\ev}_1}(\D^{\otimes}) $$
is an equivalence if $\infty$-groupoids (where both terms denote mapping objects in the respective \textbf{over-categories}). This is in fact not a special property of $F_{\iota}$:
\begin{lem}\label{left-envelope}
Let $\OO$ be a symmetric monoidal $\infty$-category with corresponding $\infty$-operad $\OO^{\otimes} \lrar \N(\Gam_*)$ and let $p:\C^{\otimes} \lrar \OO^{\otimes}$ be a fibration of $\infty$-operads such that the induced map
$$ \ovl{p}:\Env\left(\C^{\otimes}\right) \lrar \OO $$
is a left fibration. Let $\D \lrar \OO$ be some other left fibration of symmetric monoidal categories. Then the natural map
$$ \Fun^{\otimes}_{/\OO}\left(\Env\left(\C^{\otimes}\right),\D\right) \lrar \Alg_{\C / \OO}(\D^{\otimes}) $$
is an equivalence of $\infty$-categories. Further more both sides are in fact $\infty$-groupoids.
\end{lem}
\begin{proof}
Consider the diagram
$$ \xymatrix{
\Fun^{\otimes}(\Env\left(\C^{\otimes}\right),\D) \ar^{\simeq}[r]\ar[d] & \Alg_{\C}\left(\D^{\otimes}\right) \ar[d] \\
\Fun^{\otimes}(\Env\left(\C^{\otimes}\right),\OO) \ar^{\simeq}[r]       & \Alg_{\C}\left(\OO^{\otimes}\right) \\
}$$
Now the vertical maps are left fibrations and by adjunction the horizontal maps are equivalences. By~\cite{lur3} Proposition $3.3.1.5$ we get that the induced map on the fibers of $p$ and $\ovl{p}$ respectively
$$ \Fun^{\otimes}_{/\OO}\left(\Env\left(\C^{\otimes}\right),\D\right) \lrar \Alg_{\C / \OO}(\D^{\otimes}) $$
is a weak equivalence of $\infty$-groupoids.
\end{proof}

\begin{rem}
In~\cite{lur2} a relative variant $\Env_{\B^{\ev}_1}$ of $\Env$ is introduced which sends a fibration of $\infty$-operads $\C^{\otimes} \lrar (\B^{\ev}_1)^{\otimes}$ to its enveloping coCartesin fibration $\Env_{\OO}\left(\C^{\otimes}\right) \lrar \B^{\ev}_1$. Note that in our case the map
$$ \F_{\iota} \lrar \B^{\ev}_1 $$
is \textbf{not} the enveloping coCartesian fibration of $\OF^{\otimes} \lrar (\B^{\ev}_1)^{\otimes}$. However from Lemma~\ref{left-envelope} it follows that the map
$$ \xymatrix{
\F_{\iota} \ar[rr]\ar[dr] && \Env_{\B^{\ev}_1}\left(\OF^{\otimes}\right) \ar[dl] \\
& \B^{\ev}_1 & \\
}$$
is a \textbf{covariant equivalence} over $\B^{\ev}_1$, i.e. induces a weak equivalence of simplicial sets on the fibers (where the fibers on the left are $\infty$-groupoids and the fibers on the right are $\infty$-categories). This claim can also be verified directly by unwinding the definition of $\Env_{\B^{\ev}_1}\left(\OF^{\otimes}\right)$.
\end{rem}

Summing up the discussion so far we observe that we have a weak equivalence of $\infty$-groupoids
$$ \Fun^{\otimes}_{/\B^{\ev}_1}\left(\F_{\iota},\F_{\vphi}\right) \x{\simeq}{\lrar} \Alg_{\OF / \B^{\ev}_1}\left(\F_{\vphi}^{\otimes}\right) $$

Let
$$ \Alg^{\nd}_{\OF / \B^{\ev}_1}\left(\F_{\vphi}^{\otimes}\right) \subseteq \Alg_{\OF / \B^{\ev}_1}\left(\F_{\vphi}^{\otimes}\right) $$
denote the full sub $\infty$-groupoid corresponding to
$$ \Fun^{\nd}_{/\B^{\ev}_1}(\F_{\iota},\F_{\vphi}) \subseteq \Fun^{\otimes}_{/\B^{\ev}_1}(\F_{\iota},\F_{\vphi}) $$
under the adjunction. We are now reduced to prove that the $\infty$-groupoid
$$ \Alg^{\nd}_{\OF / \B^{\ev}_1}\left(\F_{\vphi}^{\otimes}\right) $$
is contractible.

Let $\OI^{\otimes} \subseteq \OF^{\otimes}$ be the full sub $\infty$-operad of $\OF^{\otimes}$ spanned by connected $1$-manifolds which are diffeomorphic to the segment (and all $n$-to-$1$ operations between them). In particular we see that $\OI^{\otimes}$ is equivalent to the \textbf{non-unital associative $\infty$-operad}.

We begin with the following theorem which reduces the handling of $\OF^{\otimes}$ to $\OI^{\otimes}$.

\begin{thm}\label{removing-circles}
Let $q:\C^{\otimes} \lrar \OO^{\otimes}$ be a left fibration of $\infty$-operads. Then the restriction map
$$ \Alg_{\OF / \OO}(\C^{\otimes}) \lrar \Alg_{\OI / \OO}(\C^{\otimes}) $$
is a weak equivalence.
\end{thm}
\begin{proof}
We will base our claim on the following general lemma:
\begin{lem}\label{free-algebra}
Let $\A^{\otimes} \lrar \B^{\otimes}$ be a map of $\infty$-groupoids and let $q:\C^{\otimes} \lrar \OO^{\otimes}$ be \textbf{left fibration} of $\infty$-operads. Suppose that for every object $B \in \B$, the category
$$ \F_B = \A^{\otimes}_{\act} \times_{\B^{\otimes}_{\act}} \B^{\otimes}_{/B} $$
is weakly contractible (see~\cite{lur2} for the terminology). Then the natural restriction map
$$ \Alg_{\A / \OO}(\C^{\otimes}) \lrar \Alg_{\B / \OO}(\C^{\otimes}) $$
is a weak equivalence.
\end{lem}
\begin{proof}
In~\cite{lur2} \S $3.1.3$ it is explained how under certain conditions the forgetful functor (i.e. restriction map)
$$ \Alg_{\A / \OO}(\C^{\otimes}) \lrar \Alg_{\B / \OO}(\C^{\otimes}) $$
admits a left adjoint, called the \textbf{free algebra functor}. Since $\C^{\otimes} \lrar \OO^{\otimes}$ is a left fibration both these $\infty$-categories are $\infty$-groupoids, and so any adjunction between them will be an equivalence. Hence it will suffice to show that the conditions for existence of left adjoint are satisfies in this case.

Since $q: \C^{\otimes} \lrar \OO^{\otimes}$ is a left fibration $q$ is \textbf{compatible with colimits indexed by weakly contractible diagrams} in the sense of~\cite{lur2} Definition $3.1.1.18$ (because weakly contractible colimits exists in every $\infty$-groupoid and are preserved by any functor between $\infty$-groupoids). Combining Corollary $3.1.3.4$ and Proposition $3.1.1.20$ of~\cite{lur2} we see that the desired free algebra functor exists.
\end{proof}

In view of Lemma~\ref{free-algebra} it will be enough to check that for every object $M \in \OF$ (i.e. every connected $1$-manifolds) the $\infty$-category
$$ \F_M \x{\df}{=} \OI^{\otimes}_{\act} \times_{\OF^{\otimes}_{\act}} \left(\OF^{\otimes}_{\act}\right)_{/M} $$
is weakly contractible.

Unwinding the definitions we see that the objects of $\F_M$ are tuples of $1$-manifolds $(M_1,...,M_n)$ ($n \geq 1$), such that each $M_i$ is diffeomorphic to a segment, together with an orientation preserving embedding
$$ f: M_1 \coprod ... \coprod M_n \hrar M $$
A morphisms in $\F_M$ from
$$ f: M_1 \coprod ... \coprod M_n \hrar M $$
to
$$ g: M_1' \coprod ... \coprod M_m' \hrar M $$
is a $\pi_0$-surjective orientation-preserving embedding
$$ T:M_1 \coprod ... \coprod M_n \lrar M_1' \coprod ... \coprod M_m' $$
together with an \textbf{isotopy} $g \circ T \sim f$.

Now when $M$ is the segment then $\F_M$ contains a terminal object and so is weakly contractible. Hence we only need to take care of the case of the circle $M=S^1$.

It is not hard to verify that the category $F_{S^1}$ is in fact discrete - the space of self isotopies of any embedding $f:M_1 \coprod ... \coprod M_n \hrar M $ is equivalent to the loop space of $S^1$ and hence discrete. In fact one can even describe $F_{S^1}$ in completely combinatorial terms. In order to do that we will need some terminology.

\begin{define}
Let $\Lam_{\infty}$ be the category whose objects correspond to the natural numbers $1,2,3,...$ and the morphisms from $n$ to $m$ are (weak) order preserving maps $f: \ZZ \lrar \ZZ$ such that $f(x+n) = f(x)+m$.
\end{define}
The category $\Lam_{\infty}$ is a model for the the universal fibration over the cyclic category, i.e., there is a left fibration $\Lam_\infty \lrar \Lam$ (where $\Lam$ is connes' cyclic category) such that the fibers are connected groupoids with a single object having automorphism group $\ZZ$ (or in other words circles). In particular the category $\Lam_{\infty}$ is known to be weakly contractible. See~\cite{kal} for a detailed introduction and proof (Lemma $4.8$).

Let $\Lam^{\sur}_{\infty}$ be the sub category of $\Lam_\infty$ which contains all the objects and only \textbf{surjective} maps between. It is not hard to verify explicitly that the map $\Lam^{\sur}_\infty \lrar \Lam_\infty$ is cofinal and so $\Lam^{\sur}_{\infty}$ is contractible as well. Now we claim that $F_{S^1}$ is in fact equivalent to $\Lam^{\sur}_{\infty}$.

Let $\Lam^{\sur}_{\bg}$ be the category whose objects are linearly ordered sets $S$ with an order preserving automorphisms $\sig: S \lrar S$ and whose morphisms are surjective order preserving maps which commute with the respective automorphisms. Then $\Lam^{\sur}_{\infty}$ can be considered as a full subcategory of $\Lam^{\sur}_{\bg}$ such that $n$ corresponds to the object $(\ZZ,\sig_n)$ where $\sig_n: \ZZ \lrar \ZZ$ is the automorphism $x \mapsto x+n$.

Now let $p:\RR \lrar S^1$ be the universal covering. We construct a functor $F_{S^1} \lrar \Lam^{\sur}_{\bg}$ as follows: given an object
$$ f: M_1 \coprod ... \coprod M_n \hrar S^1 $$
of $F_{S^1}$ consider the fiber product
$$ P = \left[M_1 \coprod ... \coprod M_n\right] \times_{S^1} \RR $$
note that $P$ is homeomorphic to an infinite union of segments and the projection
$$ P \lrar \RR $$
is injective (because $f$ is injective) giving us a well defined linear order on $P$. The automorphism $\sig: \RR \lrar \RR$ of $\RR$ over $S^1$ given by $x \mapsto x + 1$ gives an order preserving automorphism $\wtl{\sig}: P \lrar P$.

Now suppose that $((M_1,...,M_n),f)$ and $((M_1',...,M_m'),g)$ are two objects and we have a morphism between them, i.e. an embedding

$$ T:M_1 \coprod ... \coprod M_n \lrar M_1' \coprod ... \coprod M_m' $$
and an isotopy $\psi: g \circ T \sim f$. Then we see that the pair $(T,\psi)$ determine a well defined order preserving map
$$ \left[M_1 \coprod ... \coprod M_n\right] \times_{S^1} \RR \lrar \left[M_1' \coprod ... \coprod M_m'\right] \times_{S^1} \RR $$
which commutes with the respective automorphisms. Clearly we obtain in this way a functor $u:F_{S^1} \lrar \Lam^{\sur}_{\bg}$ whose essential image is the same as the essential image of $\Lam^{\sur}_\infty$. It is also not hard to see that $u$ is fully faithful. Hence $F_{S^1}$ is equivalent to $\Lam^{\sur}_\infty$ which is weakly contractible. This finishes the proof of the theorem.

\end{proof}
%
%
Let
$$ \Alg^{\nd}_{\OI / \B^{\ev}_1}\left(\F_{\vphi}^{\otimes}\right) \subseteq \Alg_{\OI / \B^{\ev}_1}\left(\F_{\vphi}^{\otimes}\right) $$
denote the full sub $\infty$-groupoid corresponding to the full sub $\infty$-groupoid
$$ \Alg^{\nd}_{\OF / \B^{\ev}_1}\left(\F_{\vphi}^{\otimes}\right) \subseteq \Alg_{\OF / \B^{\ev}_1}\left(\F_{\vphi}^{\otimes}\right) $$
under the equivalence of Theorem~\ref{removing-circles}.

Now the last step of the cobordism hypothesis will be complete once we show the following:
\begin{lem}\label{final-lemma}
The $\infty$-groupoid
$$ \Alg^{\nd}_{\OI / \B^{\ev}_1}\left(\F_{\vphi}^{\otimes}\right) $$
is contractible.
\end{lem}
\begin{proof}
Let
$$ q: p^*\F_{\vphi} \lrar \OI^{\otimes} $$
be the pullback of left fibration $\F_\vphi \lrar \B^{\ev}_1$ via the map $p: \OI^{\otimes} \lrar B^{\ev}_1$, so that $q$ is a left fibration as well. In particular, since $\OI^{\otimes}$ is the non-unital associative $\infty$-operad, we see that $q$ classifies an $\infty$-groupoid $q^{-1}(\OI)$ with a non-unital monoidal structure. Unwinding the definitions one sees that this $\infty$-groupoid is the fundamental groupoid of the space
$$ \Map_{\C}(1,\vphi(X_+) \otimes \vphi(X_-)) $$
where $X_+,X_- \in \B^{\ev_1}$ are the points with positive and negative orientations respectively. The monoidal structure sends a pair of maps
$$ f,f': 1 \lrar \vphi(X_+) \otimes \vphi(X_-) $$
to the composition
$$ 1 \x{f \otimes f'}{\lrar} \left[\vphi(X_+) \otimes \vphi(X_-)\right] \otimes \left[\vphi(X_+) \otimes \vphi(X_-)\right] \x{\simeq}{\lrar} $$
$$ \vphi(X_+) \otimes \left[\vphi(X_-) \otimes \vphi(X_+)\right] \otimes \vphi(X_-) \x{Id \otimes \vphi(\ev) \otimes Id}{\lrar} \vphi(X_+) \otimes \vphi(X_-) $$
Since $\C$ has duals we see that this monoidal $\infty$-groupoid is equivalent to the fundamental $\infty$-groupoid of the space
$$ \Map_{\C}(\vphi(X_+),\vphi(X_+)) $$
with the monoidal product coming from \textbf{composition}.

Now
$$ \Alg_{\OI / \B^{\ev}_1}(\F_{\vphi}) \simeq \Alg_{\OI / \OI}(p^*\F_{\vphi}) $$
classifies $\OI^{\otimes}$-algebra objects in $p^*\F_{\vphi}$, i.e. non-unital algebra objects in
$$ \Map_{\C}(\vphi(X_+),\vphi(X_+)) $$
with respect to composition. The full sub $\infty$-groupoid
$$ \Alg^{\nd}_{\OI / \B^{\ev}_1}(\F_{\vphi}) \subseteq \Alg_{\OI / \B^{\ev}_1}(\F_{\vphi}) $$
will then classify non-unital algebra objects $A$ which correspond to \textbf{self equivalences}
$$ \vphi(X_+) \lrar \vphi(X_+)  $$
It is left to prove the following lemma:
\begin{lem}
Let $\C$ be an $\infty$-category. Let $X \in \C$ be an object and let $\E_X$ denote the $\infty$-groupoid of self equivalences $u: X \lrar X$ with the monoidal product induced from composition. Then the $\infty$-groupoid of non-unital algebra objects in $\E_X$ is contractible.
\end{lem}
\begin{proof}
Let $\Ass_{\nun}$ denote the non-unital associative $\infty$-operad. The identity map $\Ass_{\nun} \lrar \Ass_{\nun}$ which is in particular a left fibration of $\infty$-operads classifies the terminal non-unital monoidal $\infty$-groupoid $\A$ which consists of single automorphismless idempotent object $a \in \A$. The non-unital algebra objects in $\E_X$ are then classified by non-unital lax monoidal functors
$$ \A \lrar \E_X $$
Since $\E_X$ is an $\infty$-groupoid this is same as non-unital monoidal functors (without the lax)
$$ \A \lrar \E_X $$
Now the forgetful functor from unital to non-unital monoidal $\infty$-groupoids has a left adjoint. Applying this left adjoint to $\A$ we obtain the $\infty$-groupoid $\UA$ with two automorphismless objects
$$ \UA = \{1,a\} $$
such that $1$ is the unit of the monoidal structure and $a$ is an idempotent object.

Hence we need to show that the $\infty$-groupoids of monoidal functors
$$ \UA \lrar \E_X $$
is contractible. Now given a monoidal $\infty$-groupoid $\G$ we can form the $\infty$-category $\B(\G)$ having a single object with endomorphism space $\G$ (the monoidal structure on $\G$ will then give the composition structure). This construction determines a fully faithful functor from the $\infty$-category of monoidal $\infty$-groupoids and the $\infty$-category of pointed $\infty$-categories (see~\cite{lur1} Remark $4.4.6$ for a much more general statement). In particular it will be enough to show that the $\infty$-groupoid of \textbf{pointed functors}
$$ \B(\UA) \lrar \B(\E_X) $$
is contractible. Since $\B(\E_X)$ is an $\infty$-groupoid it will be enough to show that $\B(\UA)$ is weakly contractible.

Now the nerve $\N\B(\UA)$ of $\B(\UA)$ is the simplicial set in which for each $n$ there exists a single \textbf{non-degenerate} $n$-simplex $\sig_n \in \N\B(\UA)_n$ such that $d_i(\sig_n) = \sig_{n-1}$ for all $i=0,...,n$. By Van-Kampen it follows that $\N\B(\UA)$ is simply connected and by direct computation all the homology groups vanish.
\end{proof}
This finishes the proof of Lemma~\ref{final-lemma}.
\end{proof}

This finishes the proof of Theorem~\ref{qu-cobordism}.
\end{proof}

\section{ From Quasi-Unital to Unital Cobordism Hypothesis }\label{s-from-qu-to-regular}

In this section we will show how the quasi-unital cobordism hypothesis (Theorem~\ref{qu-cobordism}) implies the last step in the proof of the $1$-dimensional cobordism hypothesis (Theorem~\ref{cobordism-last-step-2}).

Let $M: \B^{\ev}_1 \lrar \Grp_{\infty}$ be a non-degenerate lax symmetric monoidal functor. We can construct a pointed \textbf{non-unital} symmetric monoidal $\infty$-category $\C_M$ as follows:
\begin{enumerate}
\item
The objects of $\C_M$ are the objects of $\B^{\ev}_1$. The marked point is the object $X_+$.
\item
Given a pair of objects $X, Y \in \C_M$ we define
$$ \Map_{\C_M}(X, Y) = M(\check{X} \otimes Y) $$
Given a triple of objects $X, Y, Z \in \C_M$ the composition law $$ \Map_{\C_M}(\check{X}, Y) \times \Map_{\C_M}(\check{Y},Z) \lrar \Map_{\C_M}(\check{X},Z) $$
is given by the composition
$$ M(\check{X} \otimes Y) \times M(\check{Y} \otimes Z) \lrar M(\check{X} \otimes Y \otimes \check{Y} \otimes Z) \lrar  M(\check{X} \otimes Z) $$
where the first map is given by the lax symmetric monoidal structure on the functor $M$ and the second is induced by the evaluation map
$$ \ev_Y : \check{Y} \otimes Y \lrar 1 $$
in $\B^{\ev}_1 $.

\item
The symmetric monoidal structure is defined in a straight forward way using the lax monoidal structure of $M$.
\end{enumerate}
It is not hard to see that if $M$ is non-degenerate then $\C_M$ is \textbf{quasi-unital}, i.e. each object contains a morphism which \textbf{behaves} like an identity map (see~\cite{har}). This construction determines a functor
$$ G: \Fun_{\nd}^{\lax}(\B^{\ev}_1,\Grp_{\infty}) \lrar \Cat^{\qu,\otimes}_{\B^{\un}_0 /} $$
where $\Cat^{\qu,\otimes}$ is the $\infty$-category of symmetric monoidal quasi-unital categories (i.e. commutative algebra objects in the $\infty$-category $\Cat^{\qu}$ of quasi-unital $\infty$-categories). In~\cite{har} it is proved that the forgetful functor
$$ S:\Cat \lrar\Cat^{\qu} $$
From $\infty$-categories to quasi-unital $\infty$-categories is an \textbf{equivalence} and so the forgetful functor
$$ S^{\otimes}:\Cat^{\otimes} \lrar \Cat^{\qu,\otimes} $$
is an equivalence as well.

Now recall that
$$ \Cat^{\sur}_{\B^{\ev}_1 /} \subseteq \Cat^{\nd}_{\B^{\ev}_1 /} $$
is the full subcategory spanned by essentially surjective functors $\vphi: \B^{\ev}_1 \lrar \C$. The fiber functor construction $\vphi \mapsto M_\vphi$ induces a functor
$$ F: \Cat^{\sur}_{\B^{\ev}_1 /} \lrar \Fun_{\nd}^{\lax}(\B^{\ev}_1,\Grp_{\infty}) $$

The composition $G \circ F$ gives a functor
$$ \Cat^{\sur}_{\B^{\ev}_1 / } \lrar \Cat^{\qu,\otimes}_{\B^{\un}_0 /} $$

We claim that $G \circ F$ is in fact \textbf{equivalent} to the composition
$$ \Cat^{\sur}_{\B^{\ev}_1 / } \x{T}{\lrar} \Cat^{\otimes}_{\B^{\un}_0 / } \x{S}{\lrar} \Cat^{\qu,\otimes}_{\B^{\un}_0 / } $$
where $T$ is given by the restriction along $X_+:\B^{\un}_0 \hrar \B^{\ev}_1$ and $S$ is the forgetful functor.

Explicitly, we will construct a natural transformation
$$ N:G \circ F \x{\simeq}{\lrar} S \circ T $$
In order to construct $N$ we need to construct for each non-degenerate functor $\vphi: \B^{\ev}_1 \lrar \D$ a natural pointed functor
$$ N_\vphi: \C_{M_\vphi} \lrar \D $$
The functor $N_\vphi$ will map the objects of $\C_{M_\vphi}$ (which are the objects of $\B^{\ev}_1$) to $\D$ via $\vphi$. Then for each $X,Y \in \B^{\ev}_1$ we can map the morphisms
$$ \Map_{\C_{M_{\vphi}}}(X,Y) = \Map_{\D}(1,\check{X} \otimes Y) \lrar \Map_{\D}(X,Y) $$
via the duality structure - to a morphism $f: 1 \lrar \check{X} \otimes Y$ one associates the morphism $\what{f}: X \lrar Y$ given as the composition
$$ X \x{Id \otimes f}{\lrar} X \otimes \check{X} \otimes Y \x{\vphi(\ev_X) \otimes Y}{\lrar} Y $$
Since $\D$ has duals we get that $N_\vphi$ is fully faithful and since we have restricted to essentially surjective $\vphi$ we get that $N_\vphi$ is essentially surjective. Hence $N_\vphi$ is an equivalence of quasi-unital symmetric monoidal $\infty$-categories and $N$ is a natural equivalence of functors.

In particular we have a homotopy commutative diagram:
$$ \xymatrix{
& \Cat^{\sur}_{\B^{\ev}_1 / } \ar_{F}[dl] \ar^{T}[dr] & \\
\Fun_{\nd}^{\lax}(\B^{\ev}_1,\Grp_{\infty}) \ar_{G}[dr] & & \Cat^{\otimes}_{\B^{\un}_0 /} \ar^{S}[dl] \\
& \Cat^{\qu,\otimes}_{\B^{\un}_0 /}  & \\
}$$

Now from Lemma~\ref{0-to-1-ev} we see that $T$ is fully faithful. Since $S$ is an equivalence of $\infty$-categories we get
\begin{cor}\label{retract}
The functor  $G \circ F$ is fully faithful.
\end{cor}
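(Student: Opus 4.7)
The plan is to deduce the corollary immediately from the homotopy commutative triangle just constructed, together with the two inputs that the corollary depends on: Lemma~\ref{0-to-1-ev} and the fact (proved in~\cite{har}) that the forgetful functor $S:\Cat^{\otimes} \to \Cat^{\qu,\otimes}$ is an equivalence of $\infty$-categories.

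First I would observe that the natural equivalence $N: G\circ F \xrightarrow{\simeq} S \circ T$ constructed object-wise above reduces the statement to showing that $S \circ T$ is fully faithful, since fully faithfulness is invariant under natural equivalence. Second, I would note that $T: \Cat^{\sur}_{\B^{\ev}_1/} \to \Cat^{\otimes}_{\B^{\un}_0/}$ is fully faithful: indeed, Lemma~\ref{0-to-1-ev} asserts that the restriction functor $\Cat^{\nd}_{\B^{\ev}_1/} \to \Cat^{\otimes}_{\B^{\un}_0/}$ is fully faithful, and since $\Cat^{\sur}_{\B^{\ev}_1/} \subseteq \Cat^{\nd}_{\B^{\ev}_1/}$ is by definition a full subcategory, the restriction of a fully faithful functor to a full subcategory remains fully faithful. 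Third, since $S$ is an equivalence it is in particular fully faithful, and the composition of two fully faithful functors is fully faithful; hence $S \circ T$ is fully faithful, and therefore so is $G \circ F$.

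There is essentially no obstacle to overcome here: the content of the corollary has already been packaged into the preceding paragraphs, and the proof is just the formal observation that fully faithfulness propagates along the two legs of the triangle. The only mild point to spell out, should the referee want it, is the verification that the natural transformation $N_{\vphi}: \C_{M_\vphi} \to \D$ is indeed a natural transformation in the $\infty$-categorical sense (i.e. is compatible with the lax monoidal structures and the base-point), but this was already explained in the paragraphs preceding the corollary, where it was noted that $N_\vphi$ is fully faithful (by the duality structure on $\D$) and essentially surjective (since $\vphi$ was chosen in $\Cat^{\sur}_{\B^{\ev}_1/}$), hence an equivalence of (quasi-unital) symmetric monoidal $\infty$-categories.
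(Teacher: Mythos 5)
Your proposal is correct and matches the paper's argument exactly: both deduce full faithfulness of $G\circ F$ from the natural equivalence $G\circ F \simeq S\circ T$, the full faithfulness of $T$ via Lemma~\ref{0-to-1-ev} (restricted to the full subcategory $\Cat^{\sur}_{\B^{\ev}_1/}$), and the fact that $S$ is an equivalence. The only difference is that you spell out the full-subcategory reduction explicitly, which the paper leaves implicit.
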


We are now ready to complete the proof of~\ref{cobordism-last-step-2}. Let $\D$ be a symmetric monoidal $\infty$-category with duals and let $\vphi: \B \lrar \D$ be a non-degenerate functor. We wish to show that the space of maps
$$ \Map_{\Cat^{\sur}_{\B^{\ev}_1 /}}(\iota,\vphi) $$
is contractible. Consider the sequence

$$ \Map_{\Cat^{\sur}_{\B^{\ev}_1 /}}(\iota,\vphi) \lrar \Map_{\Fun_{\nd}^{\lax}(\B^{\ev}_1,\Grp_{\infty})}(M_\iota,M_\vphi) \lrar
\Map_{\Cat^{\qu,\otimes}_{\B^{\un}_0 /}}(\B^{\ori}_1,\D) $$
By Theorem~\ref{qu-cobordism} the middle space is contractible and by lemma~\ref{retract} the composition
$$ \Map_{\Cat^{\sur}_{\B^{\ev}_1 /}}(\iota,\vphi) \lrar \Map_{\Cat^{\qu,\otimes}_{\B^{\un}_0 /}}(\B^{\ori}_1,\D) $$
is a weak equivalence. Hence we get that
$$ \Map_{\Cat^{\sur}_{\B^{\ev}_1 /}}(\iota,\vphi) $$
is contractible. This completes the proof of Theorem~\ref{cobordism-last-step-2}.


\begin{thebibliography}{cobordism}

\bibitem[BaDo]{bd}
Baez, J., Dolan, J., Higher-dimensional algebra and topological qauntum field theory, Journal of Mathematical Physics, 36 (11), 1995, 6073--6105.

\bibitem[Har]{har}
Harpaz, Y. Quasi-unital $\infty$-categories, PhD Thesis.

\bibitem[Lur1]{lur1}
Lurie, J., On the classification of topological field theories, Current Developments in
Mathematics, 2009, p. 129-–280, \url{http://www.math.harvard.edu/~lurie/papers/cobordism.pdf}.

\bibitem[Lur2]{lur2}
Lurie, J. \emph{Higher Algebra}, \url{http://www.math.harvard.edu/~lurie/papers/higheralgebra.pdf}.

\bibitem[Lur3]{lur3}
Lurie, J., \emph{Higher Topos Theory}, Annals of Mathematics Studies, 170, Princeton University
Press, 2009, \url{http://www.math.harvard.edu/~lurie/papers/highertopoi.pdf}.




\bibitem[Kal]{kal}
Kaledin, D., Homological methods in non-commutative geometry, preprint, \url{http://imperium.lenin.ru/~kaledin/math/tokyo/final.pdf}

\end{thebibliography}
\end{document}